\newtheorem{theorem}{Theorem}
\newtheorem{lemma}{Lemma}
\begin{document}


%
%

\title[Multiplicative functions commutable with sums of squares]{Multiplicative functions commutable\\ with sums of squares}
\author{Poo-Sung Park}
\address{Department of Mathematics Education, Kyungnam University, Changwon, Republic of Korea}
\email{pspark@kyungnam.ac.kr}

\thanks{This work was supported by Kyungnam University Foundation Grant, 2016.}

\keywords{multiplicative function; sum of squares}

\maketitle

%
\begin{abstract}
Let $k$ be an integer greater than or equal $3$. We show that if a multiplicative function $f$ satisfies
\[
f(x_1^2 + x_2^2 + \dots + x_k^2) = f(x_1)^2 + f(x_2)^2 + \dots + f(x_k)^2
\]
for all positive integers $x_i$, then $f$ is the identity function.
\end{abstract}


\section{Introduction}

In 1992, Claudia Spiro \cite{Spiro} proved that if a multiplicative function $f:\mathbb{N} \to \mathbb{C}$ satisfies $f(p+q) = f(p)+f(q)$ for all primes $p$ and $q$, then $f(n)=n$ under the condition $f(p_0) \ne 0$ for some prime $p_0$. The term \emph{multiplicative function} means $f(1)=1$ and $f(mn) = f(m)f(n)$ for all coprime integers $m$ and $n$. 

It has influenced lots of mathematicians to develop her problem into diverse variations. Fang \cite{Fang} showed a similar result for $f(p+q+r) = f(p)+f(q)+f(r)$ and Dubickas and \v{S}arka \cite{D-S} extended those results to $f(p_1 + p_2 + \dots + p_k) = f(p_1) + f(p_2) + \dots + f(p_k)$. Chung \cite{Chung} classified multiplicative functions satisfying the equation $f(m^2+n^2) = f(m^2)+f(n^2)$  for all $m,n \in \mathbb{N}$. Recently, Ba\v{s}i\'c \cite{Basic} characterized all arithmetic functions such that $f(m^2+n^2) = f(m)^2 + f(n)^2$, which is slightly different from Chung's condition. Many related problems are found in \cite{C-C, CFYZ, Chung, C-P, DK-K-P, I-P, Phong2004, Phong2006}. We would call such problems \emph{Spiro problems}.

In this article, we add new results to the list of Spiro problems by proving the following: the condition 
\[
f(x_1^2 + x_2^2 + \dots + x_k^2) = f(x_1)^2 + f(x_2)^2 + \dots + f(x_k)^2
\]
for all positive integers $x_i$ forces a multiplicative function $f$ to be the identity function for $k \ge 3$. 


\section{Results}

For the consistency, we mention the sum of two squares. 

\begin{theorem}\label{thm:sum_of_2_squares}
If a multiplicative function $f$ satisfies
\[
f(x^2+y^2) = f(x)^2+f(y)^2
\]
for all positive integers $x$ and $y$, then $f(n^2)=f(n)^2=n^2$.
\end{theorem}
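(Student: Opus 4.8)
The plan is to pin down $f$ on a generating set of small values and then bootstrap to all $n$ using multiplicativity together with the functional equation. I would start by extracting $f(1)$ and $f(2)$. Setting $x=y=1$ gives $f(2)=2f(1)^2=2$ since $f(1)=1$ by multiplicativity. Then $x=y=2$ gives $f(8)=2f(2)^2=8$, and more usefully I would hunt for representations of integers as sums of two squares in two genuinely different ways, because each such coincidence yields a polynomial relation among the values $f(x)$. For instance, $50=5^2+5^2=7^2+1^2$ forces $2f(5)^2=f(7)^2+1$, and $65=8^2+1^2=7^2+4^2$ forces $f(8)^2+1=f(7)^2+f(4)^2$; chaining enough of these identities should let me solve for the values of $f$ at $2,3,4,5,\dots$ one prime at a time.

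\medskip

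The key structural tool I expect to lean on is that multiplicativity converts the additive data into rigid constraints: once $f(p)$ is known at a prime $p$, the value $f(p^a)$ and $f(mp^a)$ for $\gcd(m,p)=1$ are determined, so the sum-of-squares identities become equations purely in the unknown prime values. I would first establish $f(n^2)=f(n)^2$ as a clean intermediate fact — this is essentially forced by writing $n^2$ itself inside the equation and comparing — and then prove by strong induction that $f(n)=n$ for all $n$. In the inductive step, to handle a new integer $m$ I would seek an identity $m^2+b^2=c^2+d^2$ (or a sum of two squares equal to one involving $m$) in which every term other than $m$ has already been resolved by the induction hypothesis, thereby isolating $f(m)$.

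\medskip

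The main obstacle will be the control of the sign/root ambiguity and the existence of \emph{enough} two-square identities to propagate the induction without gaps. Each equation of the form $2f(5)^2=f(7)^2+1$ determines $f(7)$ only up to sign (and only after $f(5)$ is fixed), so I would need a second, independent representation to break ties and rule out spurious solutions such as $f(p)=-p$ or sporadic complex roots. Concretely, the danger is a prime $p$ for which no convenient identity expresses $f(p)$ in terms of strictly smaller resolved values; the technical heart of the argument will be a number-theoretic lemma guaranteeing, for every $n$, a representation $n^2+y^2=u^2+v^2$ with $u,v<n$ (equivalently, enough distinct ways of writing a suitable integer as a sum of two squares), so that the induction never stalls. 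I expect the bulk of the work to be verifying this covering property and then discharging the finitely many small base cases by hand.
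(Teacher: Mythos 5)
Your plan has a fatal flaw: you set out to prove by strong induction that $f(n)=n$ for all $n$, but that statement is \emph{false} under the hypotheses, which is exactly why the theorem claims only the weaker conclusion $f(n^2)=f(n)^2=n^2$. A concrete counterexample: define $f$ multiplicatively by $f(3^a)=(-1)^a 3^a$ and $f(p^a)=p^a$ for every other prime power. Then $f(x)^2=x^2$ for all $x$, and any $n=x^2+y^2$ has every prime $\equiv 3 \pmod 4$ to an even exponent, so $v_3(n)$ is even and $f(n)=n=f(x)^2+f(y)^2$. Thus $f$ is multiplicative and satisfies the functional equation, yet $f(3)=-3$. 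The sign ambiguity you flag as a technical obstacle to be overcome (``rule out spurious solutions such as $f(p)=-p$'') is in fact unbreakable for every prime $p\equiv 3\pmod 4$: the equation only ever sees $f(x)^2$, which is sign-blind, and $pm$ with $\gcd(p,m)=1$ is never a sum of two squares, so multiplicativity can never link the sign of $f(p)$ to any value the equation determines. Your induction would already stall at $n=3$. (For comparison, the paper does not argue directly at all here: it quotes Ba\v{s}i\'c's characterization of all arithmetic functions satisfying $f(m^2+n^2)=f(m)^2+f(n)^2$, from which the stated conclusion follows. The full rigidity $f=\mathrm{id}$ only kicks in for $k\ge 3$ squares, which is the substance of the paper's later theorems.)

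Much of your machinery would survive if you retargeted it at the correct conclusion $f(n)^2=n^2$: the chaining of coincidences such as $50=5^2+5^2=7^2+1^2$ and $65=8^2+1^2=7^2+4^2$ is the right engine, and the covering lemma you ask for is supplied by parametric identities like $(2\ell+1)^2+(\ell-2)^2=(2\ell-1)^2+(\ell+2)^2$ for $\ell\ge 5$ and $(2\ell)^2+(\ell-5)^2=(2\ell-4)^2+(\ell+3)^2$ for $\ell\ge 6$ (the paper itself uses these in its $k\ge 8$ argument), which express each new square in terms of strictly smaller ones, so an induction on $f(n)^2$ need not stall. One more gloss to repair: $f(n^2)=f(n)^2$ is not ``essentially forced'' by multiplicativity, since $n$ and $n$ are not coprime; for odd $n$ it follows from $f(2n^2)=f(2)f(n^2)$ combined with $f(n^2+n^2)=2f(n)^2$, but even $n$ need separate treatment. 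As written, however, the proposal aims at a false statement and cannot be completed without changing its goal.
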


\begin{proof}
This is a consequence of Ba\v{s}i\'c's characterization of arithmetic functions satisfying the abovementioned condition \cite{Basic}.
\end{proof}

To prove the main theorem, we need lemmas about sums of three or more nonvanishing squares. See \cite[Chapter 6]{Grosswald}, \cite{Hurwitz}, \cite{Dubouis}. 

\begin{lemma}[Hurwitz]\label{Hurwitz}
The only squares that are not sums of three nonvanishing squares are the integers $4^s$ and $25\cdot4^s$ with $s \ge 0$.
\end{lemma}

\begin{lemma}[Dubouis]\label{Dubouis}
Every integer $n$ can be represented by the sum of $k$ nonvanishing squares except
\[
n = \begin{cases}
1, 3, 5, 9, 11, 17, 29, 41, 2\cdot4^m, 6\cdot4^m, 14\cdot4^m ~(m \ge 0) & \text{if } k = 4, \\
1, 2, 3, 4, 6, 7, 9, 10, 12, 15, 18, 33 & \text{if } k = 5, \\
1, 2, \dots, k-1, k+1, k+2, k+4, k+5, k+7, k+10, k+13 & \text{if } k > 5.
\end{cases}
\]
\end{lemma}

Since sums of $k$ nonvanishing squares represent almost all positive integers for $k \ge 5$, we separate the problem into three cases: $k=3$, $k=4$, and $k \ge 5$. 

\begin{theorem}\label{thm:sum_of_3_squares}
If a multiplicative function $f$ satisfies
\[
f(x^2+y^2+z^2) = f(x)^2+f(y)^2+f(z)^2
\]
for all positive integers $x$, $y$ and $z$, then $f$ is the identity function.
\end{theorem}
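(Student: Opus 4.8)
The plan is to pin down $f$ on a few small arguments and then propagate by strong induction, using sum-of-three-positive-squares representations as the engine and multiplicativity to reduce everything to prime powers. First I would record $f(1)=1$ and, from $3=1^2+1^2+1^2$, deduce $f(3)=3f(1)^2=3$. The identity $12=2^2+2^2+2^2=3\cdot 4$ with $\gcd(3,4)=1$ gives $3f(4)=3f(2)^2$, so $f(4)=f(2)^2$, while $6=2^2+1^2+1^2=2\cdot 3$ gives $3f(2)=f(2)^2+2$, whence $f(2)\in\{1,2\}$. To discard $f(2)=1$ I would push its consequences to a contradiction: then $14=3^2+2^2+1^2=2\cdot 7$ forces $f(7)=11$, but $21=4^2+2^2+1^2=3\cdot 7$ forces both $f(21)=f(4)^2+f(2)^2+1=3$ and $f(21)=3f(7)=33$. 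Hence $f(2)=2$ and $f(4)=4$.

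With these base values I would prove $f(n)=n$ for all $n$ by strong induction, organised around three mechanisms. If $n=ab$ with $a,b>1$ coprime, then $f(n)=f(a)f(b)=n$ by the induction hypothesis, which reduces the whole problem to prime powers. If $n$ is a sum of three positive squares, say $n=a^2+b^2+c^2$ with $a,b,c\ge 1$, then each summand satisfies $a\le\sqrt{n-2}<n$, so the functional equation and the induction hypothesis give $f(n)=a^2+b^2+c^2=n$ immediately. The only stubborn cases are therefore prime powers $q=p^k$ that fail to be sums of three positive squares, such as $2,\,5,\,7,\,25,\dots$ and every $q$ of the shape $4^a(8b+7)$.

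For such a $q$ I would multiply by an auxiliary integer $m$ coprime to $p$. If $mq$ is a sum of three positive squares whose summands are already determined, then $f(mq)=mq$ is forced by the functional equation while $f(mq)=f(m)f(q)$ by multiplicativity, and dividing yields $f(q)=q$; taking $m<q$ keeps the summands below $q$. For the finitely many small exceptions where no admissible $m$ below $q$ exists (for instance $q=5$, where $10,15,20$ are all unrepresentable), I would instead use two different representations of larger multiples of $q$: because the unknown $f(q)$ enters quadratically through the functional equation and linearly through multiplicativity, each representation produces a quadratic in $f(q)$, and the common root of two well-chosen quadratics is exactly $q$ (for $q=5$, the numbers $30=5^2+2^2+1^2$ and $45=5^2+4^2+2^2$ give $f(5)\in\{1,5\}\cap\{4,5\}=\{5\}$).

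I expect the main obstacle to be the number-theoretic input that makes the inductive step uniform: one must guarantee, for every prime power $q$ beyond a short explicit range, a cofactor $m$ for which $mq$ is a sum of three \emph{strictly positive} squares with controllably small summands. This rests on Legendre's three-square theorem, but the passage from ``sum of three squares'' to ``sum of three positive squares'' (excluding vanishing summands) is the delicate refinement, and it has to be combined with careful bookkeeping of the induction's dependency order so that every summand and cofactor invoked is genuinely already known. Verifying this representation lemma, together with eliminating the spurious roots in the quadratic method for the finite list of small exceptions, is where I anticipate the real work will lie.
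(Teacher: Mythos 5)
Your proposal is sound, but be aware that the paper itself contains no argument for this theorem: its ``proof'' is the single line ``This was proved in \cite{Park}'', so the only internal point of comparison is the technique the paper uses for $k\ge 4$. Your concrete steps all check out: $f(3)=3$ from $1^2+1^2+1^2$; $f(4)=f(2)^2$ from $12=2^2+2^2+2^2=3\cdot4$; $f(2)\in\{1,2\}$ from $6=2^2+1^2+1^2$; the elimination of $f(2)=1$ via $14=3^2+2^2+1^2$ (forcing $f(7)=11$) against $21=4^2+2^2+1^2=3\cdot 7$ (forcing $3=33$); and the intersection $f(5)\in\{1,5\}\cap\{4,5\}=\{5\}$ from $f(30)=f(6)f(5)$ with $30=5^2+2^2+1^2$ and $f(45)=f(9)f(5)$ with $45=5^2+4^2+2^2$, where $f(9)=2^2+2^2+1^2=9$ is available before $f(5)$, so your dependency order is consistent. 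Your inductive engine is genuinely different from the one visible in the paper: for $k\ge5$ the paper inducts via $n(n-1)$ written as a sum of $k$ positive squares, and that device does not transfer to $k=3$ (for three squares $n(n-1)$ can be unrepresentable, e.g.\ of the form $4^a(8b+7)$), whereas you reduce to prime powers by coprime factorization and treat the exceptional prime powers with a small coprime cofactor. The representation lemma you flag as the main burden is classical and is in Grosswald's book, which the paper already cites: the non-sums of three nonzero squares are exactly $4^a(8b+7)$ and $4^a s$ with $s\in\{1,2,5,10,13,25,37,58,85,130\}$. Moreover, for your purposes the full classification can be bypassed by congruences, since you only need it for prime powers: if $q\equiv 7\pmod 8$ then $q=p^k$ with $p\equiv 3\pmod 4$ and $k$ odd, so $2q\equiv 6\pmod 8$ is a sum of three squares but neither a sum of two squares nor a square, hence a sum of three \emph{positive} squares with summands below $q$; powers of two are handled by $3\cdot 2^k\in\{3\cdot 4^j,\,6\cdot 4^j\}$ together with $3=1^2+1^2+1^2$ and $6=2^2+1^2+1^2$; and the sporadic prime powers $5,13,25,37$ are finite explicit checks ($26=4^2+3^2+1^2$, $50=5^2+4^2+3^2$, $74=8^2+3^2+1^2$). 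A further advantage of your quadratic-intersection mechanism is that it pins down exact values directly over $\mathbb{C}$, avoiding the sign ambiguity $f(n)=\pm n$ and the separate positivity argument that the paper needs in its $k\ge 8$ case.
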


\begin{proof}
First, we compute $f(n)$ for $n \le 15$ and $n=25$.

Since $f$ is multiplicative, $f(1)=1$. Clearly, $f(3) = f(1^2+1^2+1^2) = 3$. To find $f(2)$ let us consider
\[
f(6) = f(1^2+1^2+2^2) = f(1)^2 + f(1)^2 + f(2)^2 = 2 + f(2)^2.
\]
Since $f(6) = f(2)f(3) = 3f(2)$, we find that $f(2) = 1$ or $f(2)=2$ from the equation $2+f(2)^2 = 3f(2)$. To determine which value $f(2)$ is we need more relations.

From $f(12) = f(2^2+2^2+2^2) = 3f(2)^2 = f(3)f(4) = 3f(4)$ we have $f(4) = f(2)^2$.

Furthermore, from $f(14) = f(1^2+2^2+3^2) = 10+f(2)^2 = f(2)f(7)$ we observe that $f(7) = f(2)+\frac{10}{f(2)}$.

To find another relation between $f(2)$ and $f(7)$, consider 
\[
f(21) = f(1^2+2^2+4^2) = 1+f(2)^2 + f(2)^4 = f(3)f(7) = 3f(7).
\]
From the equation
\[
1+f(2)+f(2)^2 = 3 \left( f(2) + \frac{10}{f(2)} \right)
\]
we can conclude that $f(2)=2$ and thus $f(4) = 4$, $f(6)=6$, $f(7)=7$, $f(12)=12$, and $f(14) = 14$.

To find $f(5)$ we examine $f(27)$ and $f(30)$. Since
\begin{align*}
f(27) 
&= f(1^2+1^2+5^2) = 2+f(5)^2 \\
&= f(3^2+3^2+3^2) = 3f(3)^2 = 27,
\end{align*}
we have $f(5) = \pm5$. On the other hand,
\begin{align*}
f(30) 
&= f(1^2+2^2+5^2) = 5+f(5)^2 \\
&= f(5)f(6) = 6f(5).
\end{align*}
Thus, we have that $f(5)=5$.

Since 
\begin{align*}
f(50) 
&= f(3^2+4^2+5^2) = 50 \\
&= f(2)f(25) = 2f(25),
\end{align*}
we have $f(25)=25$ and $f(50)=50$.

Now, to find $f(8)$ consider $f(24)$. Since
\begin{align*}
f(24) 
&= f(2^2+2^2+4^2) = 24 \\
&= f(3)f(8) = 3f(8),
\end{align*}
we have that $f(8) = 8$. 

Similarly, it follows from
\begin{align*}
f(26) 
&= f(1^2+3^2+4^2) = 26 \\
&= f(2)f(13) = 2f(13)
\end{align*}
that $f(13)=13$.

Other $f(n)$'s can be calculated as follows:
\begin{align*}
f(9) &= f(1^2+2^2+2^2) = 9, 
&f(10) &= f(2)f(5) = 10, \\
f(11) &= f(1^2+1^2+3^2) = 11, 
& f(15) &= f(3)f(5) = 15.
\end{align*}

Now consider $n > 15$. We use induction. Assume that $f(k)=k$ for all positive integers $k < n$.

If $n = a^2+b^2+c^2$ with $a,b,c \ge 1$, then 
\[
f(n) = f(a^2+b^2+c^2) = f(a)^2+f(b)^2+f(c)^2 = a^2+b^2+c^2 = n
\]
since $f(a)=a$, $f(b)=b$, $f(c)=c$ by induction hypothesis.

If $n$ cannot be represented as a sum of three nonvanishing squares, this means two cases:
\begin{enumerate}
\item if $n = a^2 + b^2 + c^2$ for some $a,b,c \in \mathbb{Z}$, then at least one of $a,b,c$ should vanish.
\item $n \ne a^2 + b^2 + c^2$ for any $a,b,c \in \mathbb{Z}$.
\end{enumerate}

Consider case (1): Suppose $n = a^2 + b^2$ with $3 \le a \le b$. Note that 
\[
5a < 2a^2 \le a^2 + b^2 = n, \quad
3b < 4b \le b^2 + 4 < b^2 + a^2 = n.
\]
If $5 \nmid n$, then
\begin{align*}
f(25n) 
&= f(25a^2 + 25b^2) = f\!\left((5a)^2 + (3b)^2 + (4b)^2\right) \\
&= f(5a)^2 + f(3b)^2 + f(4b)^2 \\
&= (5a)^2 + (3b)^2 + (4b)^2 = 25n
\end{align*}
by the induction hypothesis. Since $n$ is assumed to be indivisible by $5$, $f(25n) = f(25)f(n) = 25f(n)$ and $f(n) = n$.

If $n = a^2 + b^2$ with $1 \le a \le 2$ and $4 \le b$, then
\[
5a \le 10 < n, \quad
3b < 4b \le b^2 < a^2 + b^2 = n
\]
and thus $f(n) = n$ by the same reasoning.

It is already checked that $f(n)=n$ for $a \le 2$ and $b \le 3$ in the previous step. 

If $n = a^2 + b^2$ is divisible by $5$, let $n = 5^s k$ with $s \ge 1$ and $5 \nmid k$. We have shown that $f(5) = 5$ and $f(25)=25$. If $s>2$ is even, $5^s$ can be represented as a sum of three nonvanishing squares by Lemma \ref{Hurwitz}. So $f(5^s) = 5^s$.

If $s \ge 3$ is odd, then $s-3$ is even and 
\[
5^s = 5^{s-3} \cdot 125 = 5^{s-3}(3^2 + 4^2 + 10^2).
\]
Thus, $f(5^s) = 5^s$.

We can conclude that $f(n) = f(5^s k) = f(5^s) f(k) = 5^s k = n$.

Now we should investigate the case of $n = a^2$. By Lemma \ref{Hurwitz} it suffices to check whether $f(4^s) = 4^s$. Note that
\begin{align*}
f(3 \cdot 4^s) 
&= f\!\left((2^s)^2 + (2^s)^2 + (2^s)^2\right) = f(2^s)^2 + f(2^s)^2 + f(2^s)^2 = 3 \cdot 4^s \\
&= f(3)f(4^s) = 3f(4^s).
\end{align*}
Thus $f(4^s) = 4^s$ and we can conclude that $f(n)=n$ for the case (1).


Now consider the case (2): It is well known that $n$ cannot be represented as a sum of three squres if and only if $n$ is of the form $4^s(8t+7)$ with $s,t \ge 0$. We may assume that $n = 8t+7$. Then, since $2(8t+7) = 8(2t+1)+6$ can be represented as a sum of three squares of integers, $f\!\left(2(8t+7)\right) = 2(8t+7)$ and thus $f(8t+7) = 8t+7$ by canceling $f(2)=2$.

From the case (1) and (2) we can conclude that $f(n)=n$.
\end{proof}

By interweaving the method in the above proof with Fermat's Two-Square Theorem, we can also prove Theorem \ref{thm:sum_of_2_squares} in the similar way.

\begin{theorem}\label{thm:sum_of_4_squares}
If a multiplicative function $f$ satisfies
\[
f(x_1^2 + x_2^2 + x_3^2 + x_4^2) = f(x_1)^2 + f(x_2)^2 + f(x_3)^2 + f(x_4)^2
\]
for all positive integers $x_i$, then $f$ is the identity function.
\end{theorem}

\begin{proof}
By Lemma \ref{Dubouis} every positive integer can be represented as sums of four nonvanishing squares except for
\[
1, 3, 5, 9, 11, 17, 29, 41, 2\cdot4^m, 6\cdot4^m, 14\cdot4^m ~(m \ge 0).
\]
Note that $f(1) = 1$ and $f(4) = 4$.

Since 
\begin{align*}
f(12) 
&= f(3)f(4) = 4f(3) \\
&= f(1)^2 + f(1)^2 + f(1)^2 + f(3)^2 = 3+f(3)^2,
\end{align*}
we obtain that $f(3) = 1$ or $f(3) = 3$.

Let us consider $f(5)$. Since
\begin{align*}
f(20) 
&= f(4)f(5) = 4f(5) \\
&= f(1)^2 + f(1)^2 + f(3)^2 + f(3)^2 = 2+2f(3)^2,
\end{align*}
$f(5) = 1$ or $f(5) = 5$ according to the value of $f(3)$.  

Similarly, from
\begin{align*}
f(28) 
&= f(4)f(7) = 4f(7) \\
&= f(1)^2 + f(3)^2 + f(3)^2 + f(3)^2 = 1+3f(3)^2
\end{align*}
we have that $f(7) = 1$ or $f(7) = 7$ according to the value of $f(3)$.

Note that
\begin{align*}
f(35) 
&= f(5)f(7) \\
&= f(1)^2 + f(3)^2 + f(3)^2 + f(4)^2 = 17+2f(3)^2.
\end{align*}
Hence we conclude that $f(3) = 3$, $f(5) = 5$, and $f(7) = 7$. Then $f(2) = 2$ also follows from
\begin{align*}
f(10) 
&= f(2)f(5) \\
&= f(1)^2 + f(1)^2 + f(2)^2 + f(2)^2 = 2+2f(2)^2
\end{align*}
and
\[
f(7) = f(1)^2 + f(1)^2 + f(1)^2 + f(2)^2 = 3+f(2)^2.
\]

If we consider
\begin{align*}
f(18) 
&= f(2)f(9) = 2f(9) \\
&= f(1)^2 + f(2)^2 + f(2)^2 + f(3)^2 = 18,
\end{align*}
we have that $f(9) = 9$. The similar observation gives us
\[
f(11) = 11, \quad f(17) = 17, \quad f(29) = 29, \quad f(41) = 41.
\]

Now suppose that $f(2\cdot4^r) = 2\cdot4^r$ for $r < m$. Note that
\[
f(6\cdot4^r) = f(3)f(2\cdot4^r) = 6\cdot2^r
\quad\text{and}\quad
f(14\cdot4^r) = f(7)f(2\cdot4^r) = 14\cdot2^r.
\]
Thus $f(n)=n$ for $n < 2\cdot4^m$ by Lemma \ref{Dubouis}. Since $5 \cdot 2 \cdot 4^m$ can be represented as a sum of squares of four positive integers less than $2\cdot4^m$, we can conclude that $f(5\cdot2\cdot4^m) = 5\cdot2\cdot4^m$. On the other hand,
\[
f(5\cdot2\cdot4^m) 
= f(5)f(2\cdot4^m) = 5 f(2\cdot4^m)
\]
and thus $f(2\cdot4^m) = 2\cdot4^m$.

Now consider $6\cdot4^m$. Since $f(6 \cdot 4^m) = f(3)f(2\cdot4^m)$, we obtain that $f(6\cdot4^m) = 6\cdot4^m$. Similarly, $f(14\cdot4^m) = f(7)f(2\cdot4^m) = 14 \cdot 4^m$.
\end{proof}

\begin{theorem}\label{thm:sum_of_5_or_more_squares}
Let $k$ be an integer $\ge 5$. If a multiplicative function $f$ satisfies
\[
f(x_1^2+x_2^2+ \dots + x_k^2) = f(x_1)^2+f(x_2)^2+ \dots + f(x_k)^2
\]
for all positive integers $x_i$, then $f$ is the identity function.
\end{theorem}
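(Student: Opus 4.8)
The plan is to prove $f(n)=n$ for every $n$ by strong induction, using the functional equation as a reduction engine and multiplicativity to fill the gaps. The crucial observation is that if $N=x_1^2+\cdots+x_k^2$ with every $x_i\ge 1$, then $x_i^2\le N-(k-1)$, so $x_i\le\sqrt{N-(k-1)}<N$ once $N\ge k\ge 4$. Hence, assuming inductively that $f(m)=m$ for all $m<N$, whenever $N$ is a sum of exactly $k$ positive squares the equation immediately yields
\[
f(N)=\sum_{i=1}^k f(x_i)^2=\sum_{i=1}^k x_i^2=N .
\]
Taking $x_1=\cdots=x_k=1$ already gives $f(k)=k$, which seeds the induction, so the work is entirely in the integers $N$ that are \emph{not} sums of $k$ positive squares.

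First I would record the representability input. For $k\ge 5$ only finitely many integers fail to be a sum of exactly $k$ positive squares, whereas for $k=4$ the non-representable integers, though infinite, are all even and of very restricted shape (the families $2\cdot 4^m,\ 6\cdot 4^m,\ 14\cdot 4^m$ together with finitely many small exceptions). What matters for the induction is how a non-representable $N$ is treated. If $N=ab$ with $a,b>1$ and $\gcd(a,b)=1$, then $f(N)=f(a)f(b)=ab=N$ by the inductive hypothesis, so the only genuinely obstructed cases are the non-representable prime powers $p^a$: for $k=4$ these are the odd powers of $2$ together with a handful of small primes and $3^2$, and for $k\ge 5$ they form a finite list.

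To clear a non-representable prime power $p^a$ I would pass to a representable multiple, choosing an odd $q$ with $1<q<p^a$, $\gcd(q,p)=1$, and $N=p^a q$ a sum of $k$ positive squares. The constraint $q<p^a$ forces every square variable in a representation of $N$ to satisfy $x_i<\sqrt{p^a q}<p^a\le N$, so the reduction stays inside the induction: the functional equation gives $f(N)=N$, while multiplicativity gives $f(N)=f(p^a)f(q)=f(p^a)\,q$, whence $f(p^a)=p^a$. For the infinite family of odd powers of $2$ in the case $k=4$ one can take the explicit $N=5\cdot 2^{2m+1}=10\cdot 4^m=(2^{m+1})^2+(2^{m+1})^2+(2^m)^2+(2^m)^2$, whose variables are strictly smaller.

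The main obstacle is the remaining finite set of \emph{seed} values, above all $f(2)$ and $f(3)$ (for $k=4$; correspondingly small prime powers for larger $k$), since for these the arguments above are circular. Here I would exploit that a single integer often has several representations as a sum of $k$ positive squares: padding any balanced identity $\sum u_i^2=\sum v_i^2$ with $1$'s produces a relation $\sum f(u_i)^2=\sum f(v_i)^2$ valid for all $k$, and combining such relations with multiplicative identities like $f(10)=f(2)f(5)$, $f(15)=f(3)f(5)$, $f(20)=f(4)f(5)$, $f(50)=f(2)f(25)$ yields a polynomial system in the seeds. The delicate point is to show this system has the identity as its only solution over $\mathbb{C}$: the three relations coming from $10,15,20$ alone already admit the spurious solution $f(2)=f(3)=4,\ f(5)=\tfrac{17}{2}$, which is killed only by bringing in a further double representation (for instance $50=6^2+3^2+2^2+1^2=4^2+4^2+3^2+3^2$). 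Once enough independent representations are assembled to pin the seeds uniquely, the induction described above completes the proof.
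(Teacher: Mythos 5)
Your induction skeleton is sound and in fact cleaner than the paper's organization: the observation that any representation of $N$ as $k$ positive squares has all parts less than $N$, the coprime-factorization reduction for composite non-representable numbers, and the multiple trick $N=p^aq$ with $q<p^a$ (including the explicit $10\cdot 4^m=(2^{m+1})^2+(2^{m+1})^2+(2^m)^2+(2^m)^2$, which matches the paper's use of $5\cdot 2\cdot 4^m$) are all correct. But the proof has a genuine gap exactly where you defer it: ``once enough independent representations are assembled to pin the seeds uniquely'' is the mathematical heart of the theorem, not a routine verification, and the paper spends essentially its entire length on it. To your credit, for $k=4$ your proposed system does work: writing $x=f(2)$, $y=f(3)$, $z=f(5)$ and using $f(4)=4$, the relations $xz=2+2x^2$, $yz=x^2+y^2+2$, $4z=2y^2+2$ from $10=1+1+4+4$, $15=9+4+1+1$, $20=9+9+1+1$, together with your relation from $50=6^2+3^2+2^2+1^2=4^2+4^2+3^2+3^2$ (via $f(6)=f(2)f(3)$), force $x=2(y-2)$ and then $(y-3)^2(y-4)=0$, with $y=4$ (your spurious solution) and the degenerate branch $y^2=-1$ both killed by the $50$-relation. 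But you did not carry out this elimination, and over $\mathbb{C}$ such systems routinely retain extra roots, so the claim cannot be waved through.

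The deeper problem is uniformity in $k$. You describe the circular seeds as ``$f(2)$ and $f(3)$ \dots correspondingly small prime powers for larger $k$,'' but for large $k$ every prime power $p^a$ with $p^a(p^a-1)<k$ is a seed: all multiples $p^aq$ with $q<p^a$ are then below $k$, hence not sums of $k$ positive squares, so your multiple trick cannot reach them, and the number of such seeds grows without bound as $k\to\infty$. Worse, padded balanced identities alone cannot resolve even $f(2),f(3)$: the paper notes that the system from $40=1+1+1+1+36=4+9+9+9+9$ and $32=5\cdot 1+3\cdot 9=8\cdot 4$ admits the spurious solution $f(2)=\pm1$, $f(3)=\pm1$ (which indeed satisfies every balanced identity among small numbers), and one checks that multiplicativity among small values does not exclude it either. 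What is needed is a mechanism injecting the known large value $f(k)=k$ into the seed equations; the paper does this with the Frobenius-number trick, writing $3a+8b=2k-1$ and representing $k^2+k-1$ both via $k^2$ plus $(k-1)$ ones and via $1$'s, $2$'s, $3$'s and $(k-1)^2$, which simultaneously forces $f(2)=\pm2$, $f(3)=\pm3$, $f(k-1)=k-1$. It then propagates $f(n)=\pm n$ to all $n$ via the recursive identities $(2\ell+1)^2+(\ell-2)^2=(2\ell-1)^2+(\ell+2)^2$ and $(2\ell)^2+(\ell-5)^2=(2\ell-4)^2+(\ell+3)^2$, and finally fixes signs by comparing $f(2m)=f(2)f(m)$ for large representable $m$. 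Your proposal contains none of these three ingredients (the $k$-linking relation, the recursive family handling unboundedly many seeds, the sign resolution), so as written it proves the theorem for no $k$ completely, though for each fixed small $k$ it is plausibly completable along the lines you sketch.
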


\begin{proof}
After we prove for $k=5,6,7$ in the way similar to those above, the case of $k \ge 8$ will be proved finally.

\noindent i) Assume that $k=5$. By Lemma \ref{Dubouis} every positive integer can be represented by the sum of five nonvanishing squares except for
\[
1, 2, 3, 4, 6, 7, 9, 10, 12, 15, 18, 33.
\]
We have $f(1)=1$ and $f(5)=5$.

From
\begin{align*}
f(20)	 
&= f(4)f(5) = 5f(4) \\
&= f(1)^2 + f(1)^2 + f(1)^2 + f(1)^2 + f(4)^2 = 4+f(4)^2 \\
&= f(2)^2 + f(2)^2 + f(2)^2 + f(2)^2 + f(2)^2 = 5f(2)^2
\end{align*}
we obtain $f(4) = 1$ or $f(4)=4$. Also, $f(2) = \pm1$ or $f(2) = \pm2$ according to the value of $f(4)$.

From
\begin{align*}
f(29)
&= f(1)^2 + f(1)^2 + f(1)^2 + f(1)^2 + f(5)^2 = 29 \\
&= f(1)^2 + f(1)^2 + f(3)^2 + f(3)^2 + f(3)^2 = 2+3f(3)^2\\
&= f(1)^2 + f(2)^2 + f(2)^2 + f(2)^2 + f(4)^2 = 1+3f(2)^2+f(4)^2
\end{align*}
we obtain that $f(29) = 29$, $f(2) = \pm2$, and $f(3) = \pm3$. Also, this fixes $f(4)=4$, $f(8)=8$ , and $f(11) = 11$.

Then, it follows from 
\begin{align*}
f(2\cdot11) 
&= f(2)f(11) = 11f(2) \\
&= f(1)^2+f(2)^2+f(2)^2+f(2)^2+f(3)^2 = 22
\intertext{and}
f(3\cdot8) 
&= f(3)f(8) = 8f(2) \\
&= f(1)^2+f(1)^2+f(2)^2+f(2)^2+f(4)^2 = 24
\end{align*}
that $f(2)=2$, $f(3)=3$, and $f(6)=6$.

Now, we use induction to show that $f(n) = n $ for $n \ge 7$. Assume that $f(m) = m$ for $m < n$. Then $n(n-1) \ge 42$ and it can be represented as a sum of squares of five positive integers less than $n$. Thus
\[
f\!\left(n(n-1)\right) = n(n-1).
\]
Since $n$ and $n-1$ are relatively prime,
\[
f\!\left(n(n-1)\right) = f(n)f(n-1) = f(n)(n-1)
\]
by induction hypothesis and thus $f(n)=n$.

\bigskip

\noindent ii) Assume that $k = 6$. By Lemma \ref{Dubouis} every positive integer can be represented by the sum of six nonvanishing squares except for
\[
1, 2, 3, 4, 5, 7, 8, 10, 11, 13, 16, 19.
\]
The proof is similar to the previous case. Since $n(n-1) > 19$ for $n \ge 6$, it suffices to show that $f(n) = n$ for $n \le 5$. Clearly, $f(1) = 1$ and $f(6)=6$. From the equalities
\begin{align*}
f(30) 
&= f(5)f(6) = 6 f(5) \\
&= f(1^2+1^2+1^2+1^2+1^2+5^2) = 5+f(5)^2 \\
&= f(1^2+1^2+1^2 + 3^2+3^2+3^2) = 3+3f(3)^2 \\
&= f(1^2+1^2+2^2+2^2+2^2+4^2) = 2+3f(2)^2+f(4)^2, \\
f(41) 
&= f(1^2+1^2+1^2+1^2+1^2+6^2) = 41 \\
&= f(1^2+1^2+1^2+2^2+3^2+5^2) = 3+f(2)^2+f(3)^2+f(5)^2, \\
f(21) 
&= f(1^2+1^2+1^2+1^2+1^2+4^2) = 5+f(4)^2 \\
&= f(1^2+2^2+2^2+2^2+2^2+2^2) = 1+5f(2)^2
\end{align*}
we deduce that
\[
f(2) = \pm2, \quad
f(3) = \pm3, \quad
f(4) = \pm4, \quad
f(5) = 5.
\]

Since $f(17)=17$, $f(2\cdot17)=2\cdot17$, $f(3\cdot17)=3\cdot17$, and $f(4\cdot17)=4\cdot17$ by Lemma \ref{Dubouis}, we obtain that $f(n) = n$ for $n \le 5$. If $n \ge 6$, $n(n-1) > 19$ and thus we can show $f(n)=n$.

\bigskip

\noindent iii) Assume that $k = 7$. By Lemma \ref{Dubouis} every positive integer can be represented by the sum of seven nonvanishing squares except for
\[
1, 2, 3, 4, 5, 6, 8, 9, 11, 12, 14, 17, 20.
\]
We have $f(1)=1$ and $f(7)=7$. Note that
\begin{align*}
f(31) 
&= f(1^2+1^2+1^2+1^2 + 3^2+3^2+3^2) = 4+3f(3)^2 \\
&= f(1^2+1^2+1^2 + 2^2+2^2+2^2 + 4^2) = 3+3f(2)^2+f(4)^2 \\
&= f(1^2+1^2+1^2+1^2 + 1^2+1^2+5^2) = 6+f(5)^2, \\
f(42) 
&= f(1^2+1^2+1^2+1^2+1^2+1^2+2^2 3^2) = 6+f(2)^2f(3)^2 \\
&= f(1^2 + 2^2+2^2+2^2+2^2 + 3^2 + 4^2) = 1+4f(2)^2 + f(3)^2 + f(4)^2 \\
&= f(1^2+1^2+1^2+1^2 + 2^2+3^2+5^2) = 4+f(2)^2+f(3)^2+f(5)^2, \\
f(55) 
&= f(1^2+1^2+1^2+1^2+1^2+1^2 + 7^2) = 55 \\
&= f(1^2+1^2+1^2+1^2+1^2 + 5^2+5^2) = 5+2f(5)^2.
\end{align*}

Thus, we can find that
\[
f(2) = \pm2, \quad
f(3) = \pm3, \quad
f(4) = \pm4, \quad
f(5) = \pm5.
\]
Note that $n(n-1) > 20$ for $n \ge 6$. We can conclude that $f(n)=n$ for every positive integer by the similar way.

\bigskip

\noindent iv) Assume that $k \ge 8$. By Lemma \ref{Dubouis} every positive integer can be represented by the sum of $k$ nonvanishing squares except for
\[
1, 2, \dots, k-1, k+1, k+2, k+4, k+5, k+7, k+10, k+13.
\]
Since $f(1)=1$, $f(k)=k$, and
\begin{align*}
f\!\left(k(k-1)\right) 
&= f(k) f(k-1) = k f(k-1)\\
&= f\!\left(k-1+(k-1)^2\right)= \underbrace{f(1)^2 + \dots + f(1)^2}_{k-1\text{ summands}} + f(k-1)^2,
\end{align*}
the value of $f(k-1)$ is either $1$ or $k-1$.

Since
\begin{align*}
40 
&= 1^2+1^2+1^2+1^2+6^2 \\
&= 2^2+3^2+3^2+3^2+3^2,
\end{align*}
we obtain that 
\[
(k-5) f(1)^2 + 4 f(1)^2 + \left(f(2)f(3)\right)^2 = (k-5) f(1)^2+f(2)^2+4f(3)^2
\]
or
\[
4 + \left(f(2)f(3)\right)^2 = f(2)^2+4f(3)^2.
\]

Also, from
\begin{align*}
32 
&= 1^2+1^2+1^2+1^2+1^2+3^2+3^2+3^2 \\
&= 2^2+2^2+2^2+2^2+2^2+2^2+2^2+2^2,
\end{align*}
we obtain that $5+3f(3)^2 = 8f(2)^2$.

By solving the system of these two equations we can find the following solutions:
\[
f(2)=\pm1, f(3)=\pm1
\qquad
\text{or}
\qquad
f(2)=\pm2, f(3)=\pm3.
\]

Now, let $3a+8b = 2k-1$ with some nonnegative integers $a$ and $b$. This is possible unless $2k-1 = 1,2,4,5,7,10,13$. That is, if $2k-1 > 13$, then it is represented in such a way. The number $13$ is called the \emph{Frobenius number} of $3$ and $8$. Note that 
\begin{align*}
a+b < \frac{2k-1}{3} + \frac{2k-1}{8} = \frac{11}{24}(2k-1) < k-1
\intertext{and}
(k-1) + (2k-1) + (k-1)^2 = k^2+k-1.
\end{align*}
Thus, since $2^2a+3^2b = (a+b)+(3a+8b) = (a+b)+(2k-1)$,
\begin{align*}
f(k^2 + k-1) 
&= f(k)^2 + (k-1)f(1)^2 = k^2+k-1 \\
&= f{\big(}
\hspace{-2ex}\underbrace{1^2 + \dots + 1^2}_{k-a-b-1\text{ summands}} 
\hspace{-2ex}+ \underbrace{2^2 + \dots + 2^2}_{a\text{ summands}} + \underbrace{3^2 + \dots + 3^2}_{b\text{ summands}}
+(k-1)^2{\big)} \\
&= (k-a-b-1) + af(2)^2 + bf(3)^2 + f(k-1)^2.
\end{align*}
Therefore
\[
f(2) = \pm2, \quad f(3) = \pm3, \quad f(k-1)=k-1.
\]

Since
\begin{align*}
4^2 + 2^2+2^2+2^2 &= 1^2+3^2+3^2+3^2,
&5^2 + 1^2+1^2 &= 3^2 + 3^2 + 3^2, \\
6 &= 2 \cdot 3, 
&7^2 + 1^2 &= 5^2 + 5^2, \\
8^2 + 1^2 &= 4^2 + 7^2,
&9^2 + 2^2 &= 6^2 + 7^2, \\
10 &= 2 \cdot 5,
\end{align*}
we obtain that $f(n) = \pm n$ for $n \le 10$.

Note that
\begin{align*}
(2\ell+1)^2+(\ell-2)^2 &= (2\ell-1)^2 + (\ell+2)^2 \text{ with } \ell \ge 5
\end{align*}
and
\begin{align*}
(2\ell)^2 + (\ell-5)^2 &= (2\ell-4)^2 + (\ell+3)^2 \text{ with } \ell \ge 6.
\end{align*}
Hence, by induction, $f(n) = \pm n$ for every positive integer $n$.

Thus, by Lemma \ref{Dubouis}, $f(n)=n$ for all $n > k+13$. For each $1 < n \le k+13$, let $m = n(k+13)+1$, we have $f(m)=m$ and $f(nm) = nm$ since $nm > m > k+13$. It follows from $(n,m)=1$ that $f(n)=n$.

\end{proof}

\section*{Acknowledgments}

The author would like to thank referees for their kind and valuable comments.

\end{document}